\newtheorem{prop}{Proposition}[section]
\newtheorem{thm}[prop]{Theorem}
\newtheorem{lemma}[prop]{Lemma}
\newtheorem{conj}[prop]{Conjecture}
\newtheorem{hyp}[prop]{Hypothesis}
\theoremstyle{definition}
\newtheorem{rem}[prop]{Remark}
\newtheorem{exam}[prop]{Example}
\newcommand{\cH}{{\mathcal{H}}}
\newcommand{\cA}{{\mathcal{A}}}
\newcommand{\D}{{\mathcal{D}}}
\newcommand{\He}{{\mathcal{H}}}
\newcommand{\cL}{{\mathcal{L}}}
\newcommand{\cR}{{\mathcal{R}}}
\newcommand{\cM}{{\mathcal{M}}}
\newcommand{\cZ}{{\mathcal{Z}}}
\newcommand{\dg}{{\mathrm{deg}}}
\newcommand{\er}{\sim_R}
\newcommand{\el}{\sim_L}
\newcommand{\elr}{\sim_{LR}}
\newcommand{\Z}{{\mathbb{Z}}}
\begin{document}
\title{Cells in Coxeter groups, I}

\author[M. V. Belolipetsky]{Mikhail V. Belolipetsky}
\thanks{Belolipetsky partially supported by EPSRC and CNPq research grants} 
\address{
IMPA\\
Estrada Dona Castorina 110\\
22460-320 Rio de Janeiro, Brazil}
\email{mbel@impa.br}

\author[P. E. Gunnells]{Paul E. Gunnells}
\thanks{Gunnells partially supported by NSF grant DMS 08--01214.}
\address{
Department of Mathematics and Statistics\\
University of Massachusetts\\
Amherst, MA 01003\\
USA}
\email{gunnells@math.umass.edu}

\subjclass[2010]{20G05 (primary); 20H15, 20F55 (secondary)}
\keywords{Coxeter group, Hecke algebra, Kazhdan-Lusztig cells, distinguished involutions}


\begin{abstract}
The purpose of this article is to shed new light on the combinatorial
structure of Kazhdan-Lusztig cells in infinite Coxeter groups $W$. Our
main focus is the set $\D$ of distinguished involutions in $W$, which
was introduced by Lusztig in one of his first papers on cells in
affine Weyl groups. We conjecture that the set $\D$ has a simple
recursive structure and can be enumerated algorithmically starting
from the distinguished involutions of finite Coxeter groups. Moreover,
to each element of $\D$ we assign an explicitly defined set of
equivalence relations on $W$ that altogether conjecturally determine
the partition of $W$ into left (right) cells. We are able to prove
these conjectures only in a special case, but even from these partial
results we can deduce some interesting corollaries.
\end{abstract}

\maketitle

\section{Introduction} In their seminal paper \cite{KL} Kazhdan and
Lusztig introduced the notion of cells. These are equivalence classes
in a Coxeter group $W$ and corresponding Hecke algebra $\He$ that can
be defined combinatorially and that have deep connections with
representation theory. Since then there has been considerable interest
in cells and related topics: we can mention the famous papers by
Lusztig on cells in affine Weyl groups, research of J.-Y. Shi on the
combinatorics of cells, the work of Bezrukavnikov and Ostrik on
geometry of unipotent conjugacy classes of simple complex algebraic
groups among the numerous other contributions.  See \cite{G} for
references.  Even a short survey of related results would take us far
beyond the scope of this paper.

The purpose of this article is to shed new light on the combinatorial
structure of cells in infinite Coxeter groups. Our main focus is the
set $\D$ of distinguished involutions in $W$, which was introduced by
Lusztig in \cite{L2}. We conjecture that the set $\D$ has a simple
recursive structure and can be enumerated algorithmically starting
from the distinguished involutions of finite Coxeter groups. Moreover,
to each element of $\D$ we assign an explicitly defined set of equivalence
relations on $W$ that altogether conjecturally determine the partition of
$W$ into left (right) cells. We are able to prove these conjectures only
in a special case, but even from these partial results we can deduce some
interesting corollaries. For example, we show that many non-affine infinite
Coxeter groups contain infinitely many one-sided cells. This was known
before only for a special class of right-angled Coxeter groups
\cite{Bel} and several other hyperbolic examples \cite{Bed}.

In a forthcoming article \cite{BG2} we will present the experimental
support for the conjectures. In particular, we will show that the
conjectures hold for infinite affine groups of small rank (see also
\cite{BG1} for the affine groups of rank $3$) and for large subsets of
hyperbolic triangle groups. Some of the experimental results were
first presented in \cite{G}. The results of the current article and
\cite{BG2} were announced in \cite{BG1}.

The paper is organized as follows. In \S\ref{sec:conj} we introduce
our main conjectures and show that if valid, these conjectures indeed
determine the partition of $W$ into cells (see
Theorem~\ref{prop1}). Section~\ref{sec:res} provides a proof of the
conjectures in a special case when reduced expressions of the elements
of $W$ satisfy certain restrictions. Here we make extensive use of
some unpublished results of Lusztig and Springer. In \S\ref{sec:appl}
we present some corollaries and applications of the results.


\noindent {\em Acknowledgments.} We would like to thank W.~Casselman,
J.~Humphreys, N.~Libedinsky and D.~Rumynin for helpful discussions.
We thank Jian-yi Shi for pointing a gap in the proof of Theorem~\ref{thm1} 
in the published version of the paper and for his help with the correction.

\section{Conjectures}\label{sec:conj}

Let $(W,S)$ be a Coxeter system. We will usually denote general
elements of $W$ by $v, w, x, y, z$, and simple reflections from $S$ by
$s$, $t$. For $x, y \in W$, by $z = x.y$ we mean that $z = xy$ and
$l(z) = l(x) + l(y)$, where $l\colon W\rightarrow \Z$ is the length
function in $(W,S)$.

Throughout the paper we use the terminology of \cite{KL,
L1, L2}.  Thus, let $\cH$ denote the Hecke algebra of $W$ over the
ring $\cA = \Z[q^{1/2}, q^{-1/2}]$ of Laurent polynomials in
$q^{1/2}$. Along with the standard basis $(T_w)_{w\in W}$ of $\cH$ we
have the basis $(C_w)_{w\in W}$ of \cite{KL}, where $C_w = \sum_{y\le
w}(-1)^{l(w)-l(y)}q^{l(w)/2-l(y)} P_{y,w}(q^{-1})T_y$ and
$$P_{y,w} = \mu(y,w)q^{\frac12(l(w)-l(y)-1)} + \mathrm{lower\ degree\
terms}$$ are the so-called {\em Kazhdan-Lusztig polynomials}
introduced in \cite{KL}. Considering the multiplication of the basis
elements in $\cH$ we see that there exist $h_{x,y,z}\in\cA$ such that
$C_x C_y = \sum_z h_{x,y,z} C_z.$ The value of $a(z)$ is defined to be
the smallest integer such that $q^{-\frac{a(z)}{2}}h_{x,y,z} \in
\Z[q^{-\frac12}]$ for all $x, y \in W$, or to be infinity if such an
integer does not exist. If the function $a$ on $W$ takes only finite
values (which is conjecturally true for any group $W$), then for every
$x, y, z \in W$ we can write
$$h_{x,y,z} = \gamma_{x,y,z}q^{\frac{a(z)}2} +
\delta_{x,y,z}q^{\frac{a(z)-1}2} + \mathrm{lower\ degree\ terms}.$$
This formula defines the constants $\gamma_{x,y,z}$ and
$\delta_{x,y,z}$ that we will need later.

Using the polynomials $P_{y,w}$ one can define preorders $\le_L$,
$\le_R$, $\le_{LR}$ and the associated equivalence relations $\el$,
$\er$, $\elr$ on $W$ \cite{KL}. The equivalence classes for
$\el$ (respectively $\er$, $\elr$) are called {\em left cells}
(resp.~{\em right cells}, {\em two-sided cells}) in $W$. Every result
about left cells translates to right cells and vice versa by the
duality, so in our considerations we will usually mention only one of
the two.

Let $\D_i = \{ z\in W \mid l(z) - a(z) - 2\delta(z) = i \}$, where
$l(z)$ is the length of $z$ in $(W,S)$, $\delta(z)$ is the degree of
the polynomial $P_{e,z}$, so $P_{e,z} = \pi(z)q^{\delta(z)} +
\mathrm{lower\ degree\ terms}$, and the function $a(z)$ is defined as
above.  The set $\D = \D_0$ is the set of {\em distinguished
involutions} of $W$, which was introduced in \cite[\S1.3]{L2}.


Our goal is to detect an inductive structure inside $\D$ and to
describe an explicit relationship between the elements of $\D$ and the
equivalence relations on $W$ that determine the partition into
cells. To this end let us formulate two conjectures.


We first introduce some more notations. Let $w\in W$. Denote by $\cZ(w)$
the set of all $v\in W$ such that $w = x.v.y$ for some $x, y\in W$ and
$v\in W_I$ for some $I\subset S$ with $W_I$ finite. We call $v\in \cZ(w)$
{\em maximal in $w$} and write $v\in \cM(w)$, if it is not a proper subword
of any other $v'\in \cZ(w)$ such that $w = x'.v'.y'$ with $x'\le x$ and
$y'\le y$. Let $\cZ = \cZ(W)$ be the union of $\cZ(w)$ over all $w\in W$,
$\D_f := \D\cap \cZ$ be the set of distinguished involutions of the finite
standard parabolic subgroups of $W$ and $\D_f^\bullet = \D_f
\smallsetminus (S\cup\{1\})$. We will call $w = x.v.y$ {\em rigid at
$v$} if (i) $v\in\D_f$, (ii) $v$ is maximal in $w$, and (iii) for
every reduced expression $w = x'.v'.y'$ with $v'\in\cM(w)$ and $a(v')\ge
a(v)$, we have $l(x) = l(x')$ and $l(y) = l(y')$. This notion of {\em
combinatorial rigidity} for the elements of $W$ is essential for our
conjectures and results. We refer to \cite[\S4]{BG1} for some
comments about its meaning.

\begin{conj}\label{conj1} (``distinguished involutions'')
Let $v = x.v_1.x^{-1} \in \D$ with $v_1\in \D_f^\bullet$ and $a(v) = a(v_1)$, and let $v' = s.v.s$ with $s\in S$. Then if $sxv_1$ is rigid at $v_1$, we have $v'\in\D$.
\end{conj}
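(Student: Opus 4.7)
The plan is to verify $v'\in\D=\D_0$ directly from the definition, by establishing $l(v')-a(v')-2\delta(v')=0$. First, since $v$ is an involution, so is $v'=svs$. The rigidity of $sxv_1$ at $v_1$ forces $l(sx)=l(x)+1$; together with $v=x.v_1.x^{-1}$ this produces the reduced factorization $v'=(sx).v_1.(sx)^{-1}$ and $l(v')=l(v)+2=2l(sx)+l(v_1)$. Rigidity further makes this factorization canonical in the sense that no reduced expression of $v'$ isolating a finite-parabolic subword of $a$-value $\ge a(v_1)$ can differ from it in its left and right lengths. Because $v\in\D$ with $a(v)=a(v_1)$, the defining equation gives $\delta(v)=l(x)+\delta(v_1)$, so it suffices to prove the two identities
\begin{equation*}
a(v')=a(v_1),\qquad \delta(v')=\delta(v)+1;
\end{equation*}
then $l(v')-a(v')-2\delta(v')=l(v_1)-a(v_1)-2\delta(v_1)=0$ since $v_1\in\D_f^\bullet\subset\D$.

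To obtain the $a$-identity I would expand $C_sC_v$ and $(C_sC_v)C_s$ in the Kazhdan--Lusztig basis and track the leading $q$-degrees of the structure constants $h_{\cdot,\cdot,\cdot}$. The rigidity hypothesis should guarantee that no top-degree cancellations occur, so that the constants $\gamma_{s,v,sv}$ and $\gamma_{sv,s,v'}$ are nonzero; this places $v'$ in the two-sided cell of $v_1$, i.e.\ $v'\elr v_1$, and hence $a(v')=a(v_1)$ by Lusztig's theorem that $a$ is constant on two-sided cells. For the $\delta$-identity I would iterate the Kazhdan--Lusztig recursion for $P_{y,sw}$ at $y=e$, first on the left and then on the right via the duality $w\mapsto w^{-1}$, using rigidity to rule out cancellation of the leading contribution $q\cdot P_{s,\,\cdot}$ by the $\mu$-sum correction. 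This should yield $P_{e,v'}=\pi(v)\,q^{\delta(v)+1}+(\textrm{lower terms})$, giving both $\delta(v')=\delta(v)+1$ and, as a bonus, $\pi(v')=\pi(v)$.

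The hard step is the control of the $a$-function. In an arbitrary Coxeter system $a$ is not known a priori to be finite-valued, and its behaviour under the move $v\mapsto svs$ is not governed by any elementary identity. Rigidity has to be used not only to organise the leading terms of the relevant products, but also to bound the contributions of elements of length less than $l(v')$ to the coefficients $h_{x,y,v'}$; this is the delicate combinatorial input, and is presumably where the unpublished results of Lusztig and Springer referenced in the introduction enter in the special case actually treated in Section~\ref{sec:res}. Dropping or substantially weakening the rigidity assumption is where a proof of the full conjecture appears to run into serious obstructions, which is consistent with the authors stating it as a conjecture rather than a theorem.
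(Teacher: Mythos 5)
First, a framing point: the statement you were asked to prove is stated in the paper as a \emph{conjecture}, and the paper itself only establishes a special case of it (Theorem~\ref{thm1}), under the additional hypothesis $\cL(vs)\smallsetminus\cR(vs)\neq\emptyset$ and a slightly stronger rigidity assumption. So no complete argument should be expected here, and your proposal is honestly presented as a plan rather than a proof. Your reduction is the right one and matches the paper's: write everything in terms of $l-a-2\delta$, and aim for $\delta(v')=\delta(v)+1$ together with control of $a(v')$. One remark in your favor of the paper's route: you do not need the two-sided-cell argument (via $\gamma_{s,v,sv}$, $v'\elr v_1$, and constancy of $a$ on two-sided cells) to get $a(v')=a(v_1)$. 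The paper extracts $a(svs)=a(v)$ for free at the end: once $\delta(svs)=\delta(v)+1$ is known, the elementary inequalities $a(svs)\ge a(vs)=a(v)$ and $l(w)-a(w)-2\delta(w)\ge 0$ squeeze $l(svs)-a(svs)-2\delta(svs)$ to zero. Your proposed cell-theoretic detour is both harder and unnecessary.

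The genuine gap is in your plan for the $\delta$-identity. You propose to apply the Kazhdan--Lusztig recursion twice (once for each added letter of $v'=s.v.s$) and to use rigidity ``to rule out cancellation of the leading contribution $q\cdot P_{s,\cdot}$'' in the $\mu$-sum. But each application of the recursion in which the leading term survives raises the degree of $P_{e,\cdot}$ by exactly $1$; two cancellation-free steps would give $\delta(v')=\delta(v)+2$, hence $l(v')-a(v')-2\delta(v')\le -2$, contradicting Lusztig's inequality $l-a-2\delta\ge 0$. So in exactly one of the two steps the degree must \emph{not} increase, i.e.\ a cancellation (or rather a failure of the naive degree bound to be attained) must be \emph{forced}, and the recursion alone does not force it. This is precisely the step the paper cannot do combinatorially: it proves $\delta(vs)=\delta(v)$ by showing $vs\in\D_1$, using Springer's formula $\mu(v,vs)=\sum_{v'\in\D}\delta_{sv,v,v'}+\sum_{f\in\D_1}\gamma_{sv,v,f}\pi(f)$, the symmetry $\gamma_{sv,v,f}=\gamma_{f,sv,v}$, and the uniqueness of the distinguished involution in each left cell to derive a contradiction from $vs\notin\D_1$ --- and it is here (not in the second step) that the extra hypothesis $\cL(vs)\neq\cR(vs)$ enters. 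Only the second step, $\delta(svs)=\delta(vs)+1$, is handled the way you describe, by bounding the degrees $d_1$ of the summands in $\Sigma$ and using rigidity at $v_1$ to exclude the borderline case $d_1=\delta(vs)+1$. Without an argument replacing the Springer-formula step, your outline cannot produce $\delta(v')=\delta(v)+1$, and the whole computation of $l(v')-a(v')-2\delta(v')$ collapses.
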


\begin{conj}\label{conj2} (``basic equivalences'')
Let $w = y.v_0$ with $v_0\in \cM(w)$.
\begin{itemize}
\item[(a)] Let $u = x.v_1.x^{-1}\in\D$ satisfy $a(u) \le a(v_0)$ and
suppose $w' = w.u$ has $a(w') = a(w)$. Assume that there exists
$v_{01}$ such that $v_0 = v_0'.v_{01}$, $v_{01}'xv_1$ is rigid at
$v_1$ for every $v_{01}'$ such that $v_0 = v_0''.v_{01}'$ and
$l(v_{01}') = l(v_{01})$, and such that the right descent sets satisfy
$\cR(w'v_{01}^{-1})\subsetneq\cR(w)$. Then we can choose such $v_{01}$
so that $\mu(w,w'v_{01}^{-1})\neq 0$, and hence $w \er w'v_{01}^{-1}
\er w'$.



\item[(b)] Let $w'' = w.v_1$ with $v_1 \in \D_f$ not maximal in $w''$ and $a(w'') = a(w)$.
Then we can write $w = y.v_{01}.v_{02}.v_{03}$ such that $v_{03}.v_1\in \cM(w'')$,
$\cR(w''v_{02}^{-1})\neq\cR(w)$, and $\mu(w,w''v_{02}^{-1})\neq 0$. So again
$w \er w''v_{02}^{-1} \er w''$.
\end{itemize}
\end{conj}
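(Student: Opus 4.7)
The plan is to exploit the Kazhdan--Lusztig criterion for $\er$: if $\mu(w,w^\ast)\neq 0$, $\cR(w^\ast)\not\subseteq\cR(w)$, and $a(w)=a(w^\ast)$, then $w\er w^\ast$. In part (a) I would build the chain $w\er w'v_{01}^{-1}\er w'$ by producing a single intermediate $w^\ast=w'v_{01}^{-1}$ for which both links are realized by a nonzero $\mu$. The work splits into three pieces: (i) identify the correct right factor $v_{01}$ of $v_0$; (ii) verify rigidity of $v_{01}'xv_1$ at $v_1$ and the descent inclusion $\cR(w'v_{01}^{-1})\subsetneq\cR(w)$; (iii) prove $\mu(w,w'v_{01}^{-1})\neq 0$. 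Throughout, the hypotheses $a(u)\le a(v_0)$ and $a(w')=a(w)$ keep everything inside a single two-sided cell and will be used to exclude competing factorizations.

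For step (i), I would take $v_{01}$ to be the maximal suffix of $v_0$ for which the conjugate $u_1=(v_{01}x).v_1.(v_{01}x)^{-1}$ still lies in a finite standard parabolic containing the one in which $v_1$ sits; the maximality of $v_0$ in $w$ together with the $a$-function hypothesis should force this $v_{01}$ to be well-defined. One then obtains the clean rewriting $w'v_{01}^{-1}=y.v_0'.u_1$, displaying $w'v_{01}^{-1}$ as $w$'s initial block followed by a longer distinguished involution $u_1$. For step (ii), rigidity of $v_{01}'xv_1$ at $v_1$ would follow because any factorization with a strictly longer $v_1$-part would absorb more of $v_0$ into $u$, contradicting either $v_0\in\cM(w)$ or the constancy of $a$; the descent inclusion is then combinatorial, since the right descents of $w$ lie in $v_0$ while those of $w'v_{01}^{-1}$ are governed by $u_1$.

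The genuine obstacle is step (iii). I would attempt it by induction on $l(v_{01})$, reducing to the base case $v_{01}=s$ where the passage $w\mapsto ws$ is a single right-multiplication by a simple reflection and the defining recursion of \cite{KL} for Kazhdan--Lusztig polynomials can be applied to extract the leading coefficient. Rigidity is what should make the recursion collapse to a single dominant term: any alternative factorization contributing a rival summand to $P_{w,w'v_{01}^{-1}}$ would have to violate rigidity of $v_{01}'xv_1$ at $v_1$, and hence be of lower degree. This is the step where I expect the unpublished results of Lusztig and Springer mentioned in the introduction to enter decisively, as they presumably package the stability of leading Kazhdan--Lusztig coefficients under rigid decompositions of distinguished involutions; without such input, a direct computation of $\mu$ looks out of reach.

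Part (b) I would treat as a mirror of part (a). Now $v_1$ is attached on the right and fails to be maximal in $w''$, so the true maximal element $v_{03}.v_1\in\cM(w'')$ must absorb some rightmost block $v_{03}$ of $w$. Writing $w=y.v_{01}.v_{02}.v_{03}$ with $v_{02}$ the sub-block that has to be removed to expose the enlarged parabolic piece, one obtains $w''v_{02}^{-1}=y.v_{01}.v_{03}.v_1$, and the same three-step scheme applies: existence of the decomposition from maximality of $v_{03}.v_1$, descent change from the combinatorial fact that $v_{02}$ sits strictly outside the new parabolic, and $\mu$-nonvanishing by the induction above. The principal difficulty is again concentrated in the $\mu$-calculation, so a proof of part (a) along these lines should yield part (b) with only notational changes.
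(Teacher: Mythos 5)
The statement you are addressing is Conjecture~\ref{conj2}; the paper itself offers no proof of it in general, only of the special case Theorem~\ref{thm2} (where $v_0$ is the longest element of a finite standard parabolic subgroup, $v_{01}$ is explicitly $s_1\ldots s_{l-1}$, and strong rigidity hypotheses are imposed precisely so that Theorem~\ref{thm1} applies). So what you are attempting is an open problem, and your proposal, read as a proof, has a genuine gap exactly where the content of the conjecture lies: step (iii), the nonvanishing of $\mu(w,w'v_{01}^{-1})$. You concede that without the unpublished Lusztig--Springer input ``a direct computation of $\mu$ looks out of reach,'' and nothing in the proposal supplies that input, so the chain $w\er w'v_{01}^{-1}\er w'$ is never established. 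There are secondary gaps as well: your candidate $v_{01}$ (the maximal suffix of $v_0$ whose conjugate of $v_1$ stays inside a finite standard parabolic) is not shown to satisfy the rigidity requirement of the statement, which demands rigidity of $v_{01}'xv_1$ at $v_1$ for \emph{every} right factor $v_{01}'$ of $v_0$ of the same length, not just the chosen one; the rewriting $w'v_{01}^{-1}=y.v_0'.u_1$ presupposes reducedness and that $u_1$ is again a distinguished involution, neither of which is argued; and part (b) is dismissed as a ``mirror'' of part (a) without justification.

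Where the paper does prove something (Theorem~\ref{thm2}), the mechanism is close in spirit to yours but differs in the induction variable and in how competing terms are eliminated. The paper inducts on the length of the prefix $x$ in $w=x.v_0$, with base case $x=e$ handled by the identity $P_{v_0,\,v_0uv_{01}}=P_{e,\,v_0uv_{01}}$ (from \cite[2.3.g]{KL}, since every $s_i$ is a right descent of $v_0$) together with an explicit degree count obtained by applying Step~(a) of the proof of Theorem~\ref{thm1} to the conjugates $u_j=s_{j-1}\cdots s_1us_1\cdots s_{j-1}$; the inductive step uses the recursion \cite[2.2.c]{KL} and rules out interfering terms $z$ by showing that each would force two distinct distinguished involutions into a single right cell, contradicting \cite[Thm.~1.10]{L2}. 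You propose instead to induct on $l(v_{01})$ and to kill rival summands by degree estimates tied to rigidity. That could conceivably be made to work under hypotheses like those of Theorem~\ref{thm2}, but as written it is a program rather than a proof, and in particular it does not establish the conjecture in the generality stated.
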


In practice it is usually easy to find the required endings $v_{01}$
and $v_{02}$ as in the conjecture. This can be seen, in particular, in
the statement of Theorem~\ref{thm2} and in results from \cite{BG2}. At
the same time some examples of \cite{BG2} show that there are cases,
such as affine $\tilde{{\mathrm F}}_4$, that require special
attention.

In order to apply these conjectures we need to recall some other
conjectures from the theory. The first is a variant of a conjecture of
Lusztig about the function $a$ (cf.~\cite[\S13.12]{L3}):
\begin{hyp} (``the function $a$'') \label{conj_a}
For every $w\in W$, $a(w) = a'(w)$ where
\[
a'(w) = \max_{v\in \cM(w)} a(v).
\]
\end{hyp}
One of the immediate corollaries of this hypothesis is that there
exists a constant $N\ge 0$, which depends only on $(W,S)$, such that
for every $w\in W$, $a(w)\le N$. The groups whose $a$-function
satisfies this property are called {\em bounded}. For affine and some
hyperbolic Coxeter groups the boundedness can be verified directly
\cite{L1, Bel}.

We recently learned that there exist Coxeter groups for which
Hypothesis~\ref{conj_a} is false. An example of such group $W$ was presented by
Shi in \cite{Sh2}, who showed that in $W=\tilde{{\mathrm A}}_{10}$ there exists an
explicitly defined element $w$ such that $a(w) > a'(w)$. On the other hand,
in recent papers \cite{Xi2, Zh} it is proved that Hypothesis~\ref{conj_a} holds
for Coxeter groups with a complete graph and for triangle Coxeter groups. It would
be interesting to understand the precise range of the groups for which the function
$a(w)$ satisfies the hypothesis, as well as the extent of its failure for other groups.
We can expect that our main conjectures are valid and the analogue of the following
Theorem~\ref{thm1} holds even without Hypothesis~\ref{conj_a}, but we do not know
how to prove it.


\begin{rem}
There is a small gap in the proof of Theorem 4.2 in \cite{Bel} which,
however, is easy to fix: One has to replace the corresponding part of
line 3 on page 332 there by ``in which $i_{j+1} = i_j+1$, for $j = 1,
\ldots, p-1$, and $s_{i_j}\in \cL(s_{i_1-1}\ldots s_1y)$, for $j = 1,
\ldots, p$'' and everywhere after in this paragraph replace $y$ by $y'
= s_{i_1-1}\ldots s_1y$. We would like to thank Nanhua Xi for pointing
out this issue.
\end{rem}

Finally, we recall the well known positivity conjecture (see e.g. \cite[\S3]{L1}).
\begin{conj}  (``positivity'') \label{conj_pos}
For all $x,y,z \in W$, the coefficients of Kazhdan-Lusztig polynomials
$P_{x,y}(q)$ and polynomials $h_{x,y,z}(q^{1/2},q^{-1/2})$ are
positive integers.
\end{conj}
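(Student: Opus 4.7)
The positivity conjecture is extraordinarily deep for general Coxeter groups, so my plan is to follow the only strategy known to succeed in full generality, namely to categorify $\cH$ by Soergel bimodules. Let $R$ be the symmetric algebra on the reflection representation of $W$, graded with $\deg V^\ast = 2$, and let $\mathcal{B}$ be the additive full subcategory of graded $R$-bimodules generated under direct sums, grading shifts, and direct summands by the Bott--Samelson bimodules $BS(\underline{w}) = R \otimes_{R^{s_1}} R \otimes_{R^{s_2}} \cdots \otimes_{R^{s_k}} R$ attached to reduced words $\underline{w} = (s_1,\dots,s_k)$. A theorem of Soergel identifies the split Grothendieck group of $\mathcal{B}$ with $\cH$, sending $[BS(\underline{w})] \mapsto C_{s_1}\cdots C_{s_k}$.

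The first step is to identify the indecomposable Soergel bimodules $B_w$, indexed up to grading shift by $w\in W$. The essential step is Soergel's conjecture that $[B_w] = C_w$ in $\cH$. Once Soergel's conjecture is in hand, positivity of $P_{y,w}(q)$ is automatic, because the coefficients of $P_{y,w}$ become graded multiplicities of the standard bimodule $R_y$ appearing as a subquotient of $B_w$, which are nonnegative integers by construction. Positivity of $h_{x,y,z}$ follows in parallel: decomposing $B_x \otimes_R B_y = \bigoplus_z B_z^{\oplus m_{x,y,z}(q)}$ with $m_{x,y,z}(q) \in \Z_{\ge 0}[q^{\pm 1/2}]$ and matching with $C_x C_y = \sum_z h_{x,y,z} C_z$ forces $h_{x,y,z} = m_{x,y,z}$, which manifestly has nonnegative coefficients.

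Everything thus reduces to Soergel's conjecture, and here I would follow the Elias--Williamson template: a simultaneous induction on $l(w)$ establishing three parallel statements for Soergel bimodules of length at most $l(w)$, namely (i) Soergel's conjecture itself, (ii) a hard Lefschetz property with respect to multiplication by a suitable positive linear form in $R$, and (iii) the Hodge--Riemann bilinear relations for the canonical intersection form on $\mathrm{Hom}^\bullet(BS(\underline{w}), BS(\underline{w}))$. The Hodge--Riemann positivity controls the sign of the form governing idempotent decompositions of Bott--Samelson bimodules; without definiteness one cannot rule out summands whose multiplicities would carry wrong-sign polynomials and spoil Soergel's conjecture at the inductive step.

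The decisive obstacle, then, is the inductive step for the Hodge--Riemann relations: after tensoring with $B_s = BS((s))$, one must show that the resulting intersection form remains definite of the correct sign on primitive subspaces, using only the weaker hard Lefschetz and Hodge--Riemann data already established in lower length. This is a delicate purely algebraic analogue of classical Hodge theory, with no shortcut outside the crystallographic case, where intersection cohomology of Schubert varieties together with the decomposition theorem gives positivity directly via a genuine geometric Hodge structure on $IH^\ast(\overline{X}_w)$. Controlling these signs without the geometric input is essentially the whole content of the proof.
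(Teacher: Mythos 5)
The statement you were asked about is not proved in the paper at all: it is stated as Conjecture~\ref{conj_pos} and is an \emph{assumption} throughout \S\ref{sec:res} and \S\ref{sec:appl}. The surrounding discussion only records the cases where positivity was known at the time (finite, affine, and more generally crystallographic groups, via intersection cohomology of Schubert varieties) and points to Soergel's categorification \cite{So} as the prospective route in general. So there is no proof in the paper to compare yours against, and your task was, in effect, to prove an open conjecture.

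Your proposal correctly identifies the strategy that eventually succeeded (Elias--Williamson), and the reductions you describe are sound: Soergel's categorification theorem identifies the split Grothendieck group of $\mathcal{B}$ with $\cH$, Soergel's conjecture $[B_w]=C_w$ yields positivity of the $P_{y,w}$ via the graded multiplicities in the support/standard filtration of $B_w$, and positivity of the structure constants follows from decomposing $B_x\otimes_R B_y$ into indecomposables. (One small caveat: with the signed basis $C_w$ used in this paper one gets, e.g., $C_sC_s=-(q^{1/2}+q^{-1/2})C_s$, so the literal positivity statement holds for the basis $C'_w$ and for $h_{x,y,z}$ only up to the sign $(-1)^{l(x)+l(y)+l(z)}$; the categorification matches the $C'$-basis, so your identification $h_{x,y,z}=m_{x,y,z}$ needs that translation.) The genuine gap is that you stop exactly at the step you yourself call ``essentially the whole content of the proof'': establishing that the Hodge--Riemann bilinear relations propagate under tensoring with $B_s$, using only lower-length hard Lefschetz and Hodge--Riemann data and a Rouquier-complex/limit argument to degenerate the Lefschetz operator --- none of which appears in your sketch. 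Without that inductive step nothing rules out indecomposable summands occurring with wrong-sign graded multiplicities, so Soergel's conjecture, and with it both positivity claims, remains unproven by your argument. As written, this is a correct roadmap to a theorem, not a proof of one.
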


Positivity of the coefficients of Kazhdan-Lusztig polynomials is well
known for finite and affine Weyl groups where it is proved using the
relation between Kazhdan-Lusztig polynomials and singularities of
Schubert varieties \cite{KL2,L1}. Later on this result was extended to
all crystallographic Coxeter groups using similar geometric ideas
(cf. \cite[Theorem~12.2.9]{Ku}).  This, however, can hardly be
generalized to non-crystallographic cases.  Another approach to
positivity based on categorification of the Hecke algebra was
suggested by Soergel in \cite{So}. We refer to \cite{Li} for some
recent results in this direction.\footnote{\textbf{Note added in the
proofs:} The proof of the positivity conjecture for arbitrary Coxeter
groups using Soergel bimodules was recently announced by Elias and
Williamson, see \texttt{arXiv:1212.0791}.}

\begin{lemma} \label{lem1}
Assume Conjectures~\ref{conj1}, \ref{conj2} and
Hypothesis~\ref{conj_a} hold for $W$.  Let $w = x_1.v_1.x_2.v_2$ with
$v_i\in\cM(w)$, $a(v_1) \ge a(v_2) > a(x_2)$ and $a(x_2v_2) =
a(v_2)$. Then $w\er x_1v_1$.
\end{lemma}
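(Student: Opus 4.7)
My plan is to apply Conjecture~\ref{conj2}(a) to the element $x_1 v_1$ with the distinguished involution $u := x_2 v_2 x_2^{-1}$. To invoke the conjecture, I first need $u \in \D$. Writing $x_2 = s_1 s_2 \cdots s_k$ reduced, I set $u_j := (s_1 \cdots s_j)\, v_2\, (s_j \cdots s_1)$, with $u_0 = v_2 \in \D_f^\bullet \subset \D$ as the base case (here using that $v_2 \in \cM(w) \cap \D_f$ and $v_2 \notin S \cup \{1\}$, which is forced by $a(v_2) > a(x_2) \ge 0$). By induction on $j$, each step invokes Conjecture~\ref{conj1}; the rigidity of $s_j (s_1 \cdots s_{j-1})\, v_2$ at $v_2$ is verified from the hypotheses $a(v_2) > a(x_2)$ and $a(x_2 v_2) = a(v_2)$, which together with Conjecture~\ref{conj_a} force $v_2$ to remain the unique $a$-maximal subword at every stage. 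This yields $u = u_k \in \D$ with $a(u) = a(v_2)$, and $u$ is an involution since $v_2$ is.

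Next I apply Conjecture~\ref{conj2}(a) to the factorization $x_1 v_1 = x_1 \cdot v_1$ (so $y = x_1$, $v_0 = v_1$) together with the $u$ just constructed. The inclusion $v_1 \in \cM(x_1 v_1)$ is inherited from $v_1 \in \cM(w)$; the inequality $a(u) = a(v_2) \le a(v_1) = a(v_0)$ is given; reducedness of $(x_1 v_1)\, u = x_1 v_1 x_2 v_2 x_2^{-1}$, of length $l(x_1) + l(v_1) + 2l(x_2) + l(v_2)$, follows from the reducedness of $w$ combined with the rigidity of $x_2 v_2$ at $v_2$ established above (any cancellation between $v_2$ and $x_2^{-1}$ would violate $a(x_2 v_2) = a(v_2)$); and $a((x_1 v_1)\, u) = a(x_1 v_1) = a(v_1)$ by Conjecture~\ref{conj_a}. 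The conclusion produces a suffix $v_{01}$ of $v_1$ such that $x_1 v_1 \er (x_1 v_1)\, u\, v_{01}^{-1} \er (x_1 v_1)\, u$.

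It remains to connect $(x_1 v_1)\, u = x_1 v_1 x_2 v_2 x_2^{-1}$ to $w = x_1 v_1 x_2 v_2$ inside the same right cell. I plan to establish this by a second, symmetric application of Conjecture~\ref{conj2}(a) (this time to $w$ itself, using a distinguished involution derived from $v_{01}$), combined with a matching of descent sets and $\mu$-coefficients against the intermediate $(x_1 v_1)\, u\, v_{01}^{-1}$ produced in the previous step. This last step is the main obstacle, since $w$ and $(x_1 v_1)\, u$ differ by the non-involutive factor $x_2^{-1}$ and are not related by a single direct invocation of the conjectures; the hypothesis $a(x_2 v_2) = a(v_2)$ is the essential input for controlling how the right descent sets propagate across the two elements. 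Once this bridge is secured, concatenating the two chains gives $w \er (x_1 v_1)\, u \er x_1 v_1$, as required.
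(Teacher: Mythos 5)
Your first half follows the same route as the paper's ``rigid'' case (conjugate $v_2$ through $x_2$ letter by letter via Conjecture~\ref{conj1} to get $u=x_2v_2x_2^{-1}\in\D$, then invoke Conjecture~\ref{conj2}(a) to obtain $x_1v_1\er x_1v_1u$), but there are three genuine gaps. First, you assert that the rigidity of $s_j(s_1\cdots s_{j-1})v_2$ at $v_2$ is ``forced'' by $a(v_2)>a(x_2)$ and $a(x_2v_2)=a(v_2)$. It is not: rigidity is a statement about \emph{all} reduced expressions $x'.v'.y'$ with $a(v')\ge a(v_2)$ having the same cut lengths, and the $a$-value hypotheses do not exclude a second maximal subword with the same $a$-value sitting in a different position (Example~2.8 in the paper, in $\tilde{\mathrm A}_4$, exhibits exactly this failure). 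The paper therefore treats the non-rigid case separately, rewriting $x_2v_2=x.v.y$ with $l(y)>0$ maximal and $a(y)<a(v)$ and stripping the tail by the induction hypothesis before the conjugation argument can be run; your proposal has no counterpart to this. Second, you claim $v_2\in\D_f$ (and even $\D_f^\bullet$) from $v_2\in\cM(w)$; membership in $\cM(w)$ only places $v_2$ in a finite standard parabolic, and the paper must first replace $v_2$ by the distinguished involution of that parabolic lying in its right cell before Conjecture~\ref{conj1} applies. (Also, $a(v_2)>a(x_2)\ge 0$ does not rule out $v_2\in S$; that is the paper's base case.)

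The most serious gap is the final step. You correctly identify that passing from $x_1v_1u=x_1v_1x_2v_2x_2^{-1}$ back to $w=x_1v_1x_2v_2$ is the crux, but you leave it as a ``plan'' with no argument. The paper closes this loop by setting up the whole proof as an induction on $a(v_2)$: since $a(x_2)<a(v_2)$, the trailing factor $x_2^{-1}$ can be removed by applying the \emph{induction hypothesis of the lemma itself} (together with Conjecture~\ref{conj2}(a)) to the element $x_1v_1x_2v_2x_2^{-1}$, whose last maximal block has strictly smaller $a$-value than $v_2$. Your proposal has no induction at all, so this mechanism is unavailable to you, and the proposed ``second, symmetric application of Conjecture~\ref{conj2}(a)'' does not obviously apply, since $x_2^{-1}$ is not of the form $u\,v_{01}$ required by the conjecture. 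Without an inductive framework on $a(v_2)$ (or some substitute), the argument does not close.
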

\begin{proof}
We are going to use induction by $a(v_2)$. First note that if $a(v_2)
= 1$, then $v_2 \er s \in S$, $x_2 = e$ and
$\cR(w)\not\supset\cR(x_1v_1)$ (by the maximality of $v_2$ in $w$), so
$w = x_1v_1v_2\er x_1v_1$ follows from the definition of $\er$.

If $a(v_2)>1$ and $v_2\not\in\D_f$, we can replace it by some $v_2'$
so that $w\er x_1.v_1.x_2.v_2'$ and $v_2'\in\D_f$. In order to do so first
find a distinguished involution $v_2'$ of the finite parabolic subgroup $W_I$
containing $v_2$ which is right equivalent to $v_2$ in $W_I$ (its
existence and uniqueness is proved in \cite{L2}). Then using the
maximality of $v_2$ in $w$ and the known properties of the relation $\er$
we can lift $v_2\er v_2'$ in $W_I$ to $x_1.v_1.x_2.v_2 \er x_1.v_1.x_2.v_2'$
in $W$.

Now, assume that $a(v_2)>1$, $v_2\in\D_f$ and $v_{01}'x_2v_2$ is rigid
at $v_2$ for every $v_{01}'$ such that $v_1 = v_1''.v_{01}'$ and
$a(v_{01}') < a(v_2)$.  Then $u = x_2v_2x_2^{-1}$ (and also
$x_1v_1x_2v_2x_2^{-1}$) is reduced, indeed, if it is not then there is
$s\in\cR(x_2)$ such that $sv_2s = v_2$, which contradicts the rigidity
of $x_2v_2$ at $v_2$. Rigidity of $x_2v_2$ at $v_2$ enables us to
apply Conjecture~\ref{conj1} $l(x_2)$ times starting from $v_2$ to
show that $u\in\D$. By Hypothesis~\ref{conj_a}, it follows that $a(u)
= a(v_2)$ and $a(wx_2^{-1}) = a(x_1v_1)$. We are now in a position to
use Conjecture~\ref{conj2}(a), which gives $x_1v_1 \er
x_1v_1x_2v_2x_2^{-1}$.  As $a(x_2)<a(v_2)$, we can apply the
inductions hypothesis together with Conjecture~\ref{conj2}(a) to show
that in turn $x_1v_1x_2v_2x_2^{-1} \er x_1v_1x_2v_2$, which finishes
the proof for this case.

It remains to consider the case when $v_{01}'x_2v_2$ is not rigid at
$v_2$ for some $v_{01}'$ as above. This implies that we can write
$v_{01}'x_2v_2 = x.v.y$ with $v\in\cM(xvy)$, where $a(v) = a(v_2)$,
$a(y) < a(v)$ and $l(y) > 0$. Then the induction hypothesis applies to
show that $v_{01}'x_2v_2 \er xv = v_{01}'x_3$ and $w \er x_1v_1x_3$
with $a(x_3) \le a(v_2)$.  Applying again the induction hypothesis, if
necessary, we get $x_1v_1x_3 \er x_1v_1x_4v_4$ which satisfies either
the assumptions of the lemma or the assumptions of
Conjecture~\ref{conj2}(b).  In the former case we repeat the proof of
the lemma for $w = x_1v_1x_4v_4$ and in the latter apply the
conjecture. As each iteration reduces lengths of the elements, we
eventually arrive to a situation when $v_{01}'x_2v_2$ is rigid at
$v_2$ for every admissible choice of $v_{01}'$, or when $w$ satisfies
the assumptions of Conjecture~\ref{conj2}(b) so that it gives $w \er
x_1v_1$.
\end{proof}

\begin{thm}\label{prop1}
If Conjectures~\ref{conj1}, \ref{conj2}, \ref{conj_pos} and
Hypothesis~\ref{conj_a} are satisfied for an infinite Coxeter group
$W$, then the following are true:
\begin{itemize}
\item[(1)] The set $\D$ of distinguished involutions consists of the union of $v\in\D_f$ and the elements of $W$ obtained from them using Conjecture~\ref{conj1}.
\item[(2)] The relations described in Conjecture~\ref{conj2} determine the partition of $W$ into right cells, i.e. $x\er y$ in $W$ if and only if there exists a sequence $x = x_0$, $x_1$, \ldots, $x_n = y$ in $W$ such that $\{x_{i-1}, x_i\} = \{v,v'\}$ as in~\ref{conj2} for every $i = 1, \ldots , n$.
\item[(3)] The relations described in Conjecture~\ref{conj2} together with its $\el$-analogue determine the partition of $W$ into two-sided cells.
\end{itemize}
\end{thm}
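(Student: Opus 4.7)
The plan is to prove the three parts in order, using induction on $l(w)$ with Lemma~\ref{lem1} as the main reduction tool, together with Lusztig's theorem from \cite{L2} that each right (resp.\ left) cell of $W$ contains exactly one distinguished involution. This last fact will let us parametrize right cells by their element of $\D$ and convert the question of $\er$-equivalence into one of ``reducing to the same distinguished involution via basic moves.'' Conjecture~\ref{conj_a} is invoked throughout to guarantee that every $w\in W$ admits a decomposition $w = y.v_0$ with $v_0 \in \cM(w)$ and $a(v_0) = a(w)$, and Conjecture~\ref{conj_pos} is invoked to ensure that any nonzero $\mu$-value actually witnesses a genuine edge in the $\er$-preorder (rather than being produced by a sign cancellation), so the easy direction of Part~(2) is literally legible from Conjecture~\ref{conj2}.

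For Part~(1), the nontrivial direction is that every $v\in\D$ is reachable from $\D_f$ by iterated applications of Conjecture~\ref{conj1}. I would induct on $l(v)$, the base case being $v\in\D_f$. For the inductive step pick $v_1\in\cM(v)$ with $a(v_1)=a(v)$ via Conjecture~\ref{conj_a}, and replace $v_1$ by the distinguished involution of its right class inside the ambient finite parabolic $W_I$ (as in the proof of Lemma~\ref{lem1}) so that $v_1\in\D_f^\bullet$. Using that $v$ is an involution, Lemma~\ref{lem1} and its $\el$-analogue together with uniqueness of the distinguished involution in a two-sided cell let us arrange a palindromic decomposition $v = x.v_0.x^{-1}$ with $v_0\in\D_f^\bullet$ and $a(v_0)=a(v)$. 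Peeling off the outermost simple reflection $s$ of $x$ gives $v = s.v''.s$ with $l(v'')<l(v)$, and one checks $v''\in\D$ using Conjecture~\ref{conj_a}. The induction hypothesis produces $v''$ from $\D_f$; one final application of Conjecture~\ref{conj1}, whose rigidity hypothesis on $sxv_0$ follows from maximality of $v_0$ in $v$, produces $v$ itself.

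For Part~(2), the easy direction is immediate from Conjecture~\ref{conj2} together with Conjecture~\ref{conj_pos}: each basic move outputs a pair $(w,w'v_{01}^{-1})$ with $\cR(w'v_{01}^{-1})\subsetneq\cR(w)$ and $\mu\ne 0$, which by definition of $\er$ gives $w\er w'v_{01}^{-1}$. For the converse I would show that every $w\in W$ is connected by a chain of basic moves to some element of $\D$. Writing $w = y.v_0$ as above, apply Lemma~\ref{lem1} and Conjecture~\ref{conj2}(a) to strip factors from $y$, strictly decreasing $l(w)$. The induction delivers an element of the form $x.d.x^{-1}$ with $d\in\D_f$, which is distinguished by Part~(1). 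By Lusztig's uniqueness theorem, two elements $x,y\in W$ lie in the same right cell iff this procedure sends them to the same element of $\D$, which happens iff they are basic-move equivalent. Part~(3) then follows formally from Part~(2) and its $\el$-analogue, since a two-sided cell is by definition an equivalence class of the relation generated jointly by $\er$ and $\el$.

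The main obstacle will be the termination and symmetrization bookkeeping inside Parts~(1) and~(2). The reduction via Lemma~\ref{lem1} does not directly yield a palindromic expression, so at each step one must interleave applications of Conjecture~\ref{conj2}(b) to rearrange the maximal-factor decomposition, or replace a non-distinguished maximal factor by the distinguished involution of its right $\er$-class inside the relevant finite parabolic, before Lemma~\ref{lem1} can be reapplied. Proving that these substitutions strictly decrease a well-founded invariant such as $l(w)-l(v_0)$ (and, in Part~(1), that the rigidity hypothesis of Conjecture~\ref{conj1} is actually verifiable at each stage of the reverse peeling) is where the genuine work lies; the rest of the argument is then a relatively formal assembly of the conjectures and Lusztig's uniqueness theorem.
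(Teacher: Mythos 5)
Your overall architecture for Part~(2) --- reduce an arbitrary $w$ via Lemma~\ref{lem1} and the basic moves of Conjecture~\ref{conj2} to a normal form attached to a distinguished involution, then invoke Lusztig's uniqueness theorem --- is the same as the paper's. But your treatment of Part~(1) contains genuine gaps, and they are not mere bookkeeping.

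First, you assert that for an arbitrary $v\in\D$ one can ``arrange a palindromic decomposition $v = x.v_0.x^{-1}$ with $v_0\in\D_f^\bullet$'' using Lemma~\ref{lem1} and uniqueness of the distinguished involution. Lemma~\ref{lem1} only produces right-equivalences of the form $w\er x_1v_1$; it does not produce a reduced palindromic expression for $v$ itself, and no step of your argument actually constructs one. Second, after peeling to $v = s.v''.s$ you claim $v''\in\D$ ``using Conjecture~\ref{conj_a}.'' This is the \emph{converse} of Conjecture~\ref{conj1} (passing from $s.v''.s\in\D$ to $v''\in\D$), which is not among the hypotheses and does not follow from the $a$-function conjecture: you would need $\delta(v'')=\delta(v)-1$ and $a(v'')=a(v)$, neither of which is given. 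Third, and most seriously, you claim the rigidity hypothesis of Conjecture~\ref{conj1} ``follows from maximality of $v_0$ in $v$.'' It does not --- rigidity is strictly stronger than maximality, and the paper's $\tilde{\mathrm{A}}_4$ example is designed precisely to show that an element of the form $x.v_1.x^{-1}$ with $v_1$ maximal can fail to be distinguished when rigidity fails. With that claim your induction would ``prove'' $s_0s_2s_3s_1\,s_4s_0s_4s_2\,s_1s_3s_2s_0\in\D$, which is false.

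The paper sidesteps all three problems by proving (1) and (2) simultaneously rather than proving (1) first: starting from arbitrary $w$, it strips single descents when $\#\cR(w)=1$, applies Lemma~\ref{lem1} until it reaches $w\er y.u$ with $a(y)<a(u)$ and $u\in\D_f$, and --- crucially --- uses Conjecture~\ref{conj2}(b) to shift $u$ leftward whenever $y.u$ fails to be rigid at $u$, so that rigidity holds \emph{by construction} before Conjecture~\ref{conj1} is applied $l(y)$ times to get $y.u.y^{-1}\in\D$. A final application of the same procedure to $d=y.u.y^{-1}$ shows $d\er y.u\er w$. Since by \cite{L2} each right cell contains a unique distinguished involution and every element of $\D$ is the distinguished involution of its own cell, Part~(1) falls out as a corollary. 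If you want to salvage your write-up, reorganize it along these lines: do not attempt a standalone induction over $\D$, and make the rigidity repair via Conjecture~\ref{conj2}(b) explicit.
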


\begin{proof}
By \cite[Theorem 1.10]{L2}, Hypothesis~\ref{conj_a} and Conjecture~\ref{conj_pos}
imply that each right cell of $W$ contains a uniquely
defined distinguished involution. (In his paper Lusztig is primarily
interested in affine Weyl groups, which is also emphasized by the
title the paper, but in the proofs he uses only positivity and
boundedness properties, both of which follow from our assumptions.)
To prove assertions (1) and (2) we thus need to show that for any
$w\in W$ we can find a sequence as in $(2)$ such that $x_0 = w$ and
$x_n = d$ where $d = d(w)$ satisfies the conditions of
Conjecture~\ref{conj1}.

So let $w\in W$ be an arbitrary element. If $\#\cR(w) = 1$ then $w\er
w' = ws$ such that $l(ws) = l(w)-1$. Repeating this procedure we come
to $w\er w_1$ with either $\#\cR(w_1) > 1$ or $w_1 = s \in S$. In the
second case $w_1\in\D$ and we are done. Thus we can assume that
$\#\cR(w_1) > 1$ and $w_1 = x.v$ with a maximal $v$. If
$a(x) \ge a(v)$, then using Hypothesis~\ref{conj_a} we can write $w_1
= x_1.v_1.x_2.v_2$ as in Lemma~\ref{lem1}, which implies $w_1\er
x_1.v_1$. Repeating the procedure if necessary, we come to $w_1\er w_2
= y.u$ with $\#\cR(w_2) > 1$, $u\in\cM(w_2)$ and $a(y) < a(u) =
a(w_2)$. As in the proof of Lemma~\ref{lem1} we can assume here that
$u\in\D_f$. If $w_2$ is rigid at $u$, we can apply Conjecture~\ref{conj1}
$l(y)$ times to show that $y.u.y^{-1}\in\D$. Otherwise we can shift
$u$ to the left using Conjecture~\ref{conj2}(b), and repeating this
procedure if necessary we eventually reduce $w_2$ to a right-equivalent
element of the same form which is rigid at $u$.

Thus in all the cases we show that $w\er y.u$ such that $d =
y.u.y^{-1}\in\D$. It remains to prove that $d \er y.u$ which can be
done applying the same procedure as in the previous paragraph to $d$
instead of $w$ and using the fact that $y.u$ is rigid at $u$ and
satisfies $a(y) < a(u)$ by the construction. This completes the proof
of (1) and (2).

To show (3) we just need to recall the definition of the two-sided
equivalence relation and then refer to the previous argument together
with its analogue for the left cells.
\end{proof}

Let us consider two examples which demonstrate the necessity of the
main conditions imposed in the conjectures.

\begin{exam}
Let $W$ be an affine group of type $\tilde{{\mathrm A}}_4$ with
extended Dynkin diagram labeled as in Figure~\ref{fig:a4}.

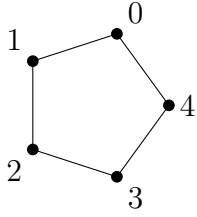
\begin{figure}[htb]
\begin{center}
\begin{tikzpicture}
\draw (1,0) -- (0.30902,0.95106) node[above right] {$0$} ;
\draw (0.30902,0.95106) -- (-0.80902,0.58779) node[above left] {$1$};
\draw (-0.80902,0.58779) -- (-0.80902,-0.58779)  node[below left] {$2$};
\draw (-0.80902,-0.58779) -- (0.30902,-0.95106)  node[below right] {$3$};
\draw (0.30902,-0.95106) -- (1.0000,0.000000000)  node[right] {$4$};
\filldraw (1,0) circle (2pt);
\filldraw (0.30902,0.95106) circle (2pt);
\filldraw (-0.80902,0.58779) circle (2pt);
\filldraw (-0.80902,-0.58779) circle (2pt);
\filldraw (0.30902,-0.95106) circle (2pt);
\end{tikzpicture}
\end{center}
\caption{Dynkin diagram for $\tilde{{\mathrm A}}_4$.\label{fig:a4}}
\end{figure}

The element $v_1 = s_4s_0s_4s_2$ is the longest element of the finite
standard parabolic subgroup $W_I$ with $I =\{ s_0, s_2, s_4\}$, so
$v_1\in\D_f$. We can check by direct computation that
$s_1\,s_4s_0s_4s_2\,s_1$, $s_3s_1\,s_4s_0s_4s_2\, s_1s_3$, and
$s_2s_3s_1\,s_4s_0s_4s_2\,s_1s_3s_2$ are in $\D$, which agrees with
Conjecture~\ref{conj1}. However, the same computation shows that
$s_0s_2s_3s_1\,s_4s_0s_4s_2\,s_1s_3s_2s_0$ is not in $\D$! The only
possible reason for this in view of the conjecture is that
$s_0s_2s_3s_1\,s_4s_0s_4s_2$ might not be rigid at $v_1$, and, indeed,
we can check that
$$s_0s_2s_3s_1\,s_4s_0s_4s_2 = s_2s_0s_3s_1\,s_0s_4s_0s_2 = s_2\,s_3s_0s_1s_0\,s_4s_0s_2,$$
where $s_3s_0s_1s_0\in\D_f$ and $a(s_3s_0s_1s_0) = a(v_1)$.
\end{exam}

\begin{exam}
It is easy to check that if $W_I$ is a finite standard parabolic
subgroup of $W$ then its longest element $w_0$ is always in
$\D_f$. The statements of our conjectures would simplify quite a bit
if all involutions in $\D_f$ had this form.  However, this is
not always the case. For example, let $W$ be a Weyl group of type
${\mathrm D}_4$ with Dynkin diagram as in Figure~\ref{fig:d4}.

\begin{figure}[htb]
\begin{center}
\begin{tikzpicture}
\draw (1,0) node [right] {$1$} -- (0,0) node [above right] {$2$};
\draw (-.5, 0.8660254) node [above  left] {$3$} -- (0,0);
\draw (-.5, -0.8660254) node [below left] {$4$} -- (0,0);
\filldraw (1,0) circle (2pt);
\filldraw (0,0) circle (2pt);
\filldraw (-0.5,0.8660254) circle (2pt);
\filldraw (-0.5,-0.8660254) circle (2pt);
\end{tikzpicture}
\end{center}
\caption{Dynkin diagram for ${\mathrm D}_4$.\label{fig:d4}}
\end{figure}
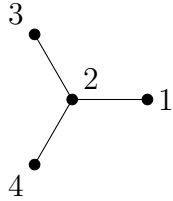
Let $v = s_2s_4s_1s_3s_2s_1s_3s_2s_4s_2s_1$. Then $a(v) = 7$ (see
\cite{J} or \cite{BG2}) and a direct computation shows that $\delta(v) = 2$,
so we have $l(v) - a(v) - 2\delta(v) = 11 - 7 - 4 = 0$, and thus
$v\in\D$. At the same time, it is clear that $v$ is not the longest
element in $W$ or in any of its standard parabolic subgroups.
\end{exam}

\section{Results}\label{sec:res} Throughout this section we assume the
positivity conjecture and boundedness of the function $a$ on $W$. These
assumptions can be slightly relaxed in some cases but can not be
completely removed. The positivity and boundedness conjectures are
widely believed to be true for any Coxeter group and are proved in a
number important special cases, we refer to the previous section for a
related discussion.

In this section we will make an extensive use of the results from \cite{L2}.
Let us emphasis once again that although \cite{L2} has a standing assumption
that the group $W$ is crystallographic, this assumption is not required for
the results that we use as it follows from the proofs in \cite{L2}.

\begin{thm}\label{thm1}
Let $v = x.v_1.x^{-1}\in\D$ with $v_1\in\D_f^\bullet$, $a(v) = a(vs)$
and $\cL(vs)\smallsetminus\cR(vs)\neq\emptyset$; and let $v' =
s.v.s$. Then if $v'$ is rigid at $v_1$, we have $v' \in \D$.
\end{thm}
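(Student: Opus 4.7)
The plan is to verify the defining identity $l(v') - a(v') - 2\delta(v') = 0$. The first step is to observe that rigidity of $v'$ at $v_1$ forces the expression $v' = (sx).v_1.(sx)^{-1}$ to be reduced, so $l(v')=l(v)+2$, which together with $l(v)=a(v)+2\delta(v)$ from $v\in\D$ and the general inequality $l(z)\ge a(z)+2\delta(z)$ reduces the problem to proving the two lower bounds
\begin{equation*}
a(v')\ge a(v) \qquad\text{and}\qquad \delta(v')\ge\delta(v)+1,
\end{equation*}
which together force equality throughout.

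For the $a$-function bound, rigidity forces $v_1\in\cM(v')$, so $v'\le_{LR} v_1$ in the two-sided preorder and therefore $a(v')\ge a(v_1)$. Combining with $a(v_1)=a(v)$, which follows from $v=x.v_1.x^{-1}\in\D$ together with standard properties of distinguished involutions (available here via positivity and boundedness of $a$), yields $a(v')\ge a(v)$. The hypothesis $a(v)=a(vs)$ is then used to obtain the matching upper bound: expanding $C_sC_{vs}$ in the Kazhdan--Lusztig basis shows that $C_{v'}$ appears with unit leading coefficient while every other basis element in the expansion has $a$-value bounded by $a(vs)=a(v)$, so $a(v')\le a(v)$.

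For the $\delta$-bound I would iterate the Kazhdan--Lusztig recursion, first using $s\in\cL(v')$ and then using $s\in\cR(vs)$:
\begin{align*}
P_{e,v'} &= qP_{s,vs}+P_{e,vs}-\sum_{sz<z\le vs}\mu(z,vs)\,q^{(l(v')-l(z))/2}P_{e,z},\\
P_{e,vs} &= qP_{s,v}+P_{e,v}-\sum_{zs<z\le v}\mu(z,v)\,q^{(l(vs)-l(z))/2}P_{e,z}.
\end{align*}
After substitution, the combined expression contains the summand $q\cdot P_{e,v}$, which contributes $\pi(v)\,q^{\delta(v)+1}$ to $P_{e,v'}$. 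By the positivity conjecture no cancellation can kill this leading term, so $\delta(v')\ge\delta(v)+1$.

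The main obstacle I foresee is pinning down the role of the hypothesis $\cL(vs)\setminus\cR(vs)\ne\emptyset$. It does not enter either of the bounds above transparently, and I expect it is used to control the correction sums in the iterated recursion: without such a hypothesis, a correction term with large $\mu(z,vs)$ could push $\delta(v')$ strictly above $\delta(v)+1$, which combined with the general upper bound would then force $a(v')<a(v)$ and contradict the equality we need. Securing this control, and pairing it with the Hecke-algebraic analysis of the $a$-function for elements of the form $x.v_1.x^{-1}$, is presumably where the unpublished Lusztig--Springer results invoked in this section play the decisive role.
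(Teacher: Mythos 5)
Your reduction of the problem to the two lower bounds $a(v')\ge a(v)$ and $\delta(v')\ge\delta(v)+1$ is exactly the closing step of the paper's proof (and the $a$-bound is immediate from $v'\le_L vs$ and $a(vs)=a(v)$; the Hecke-algebra upper bound you sketch is both unnecessary and not actually obtainable from the expansion of $C_sC_{vs}$ alone). The genuine gap is in the $\delta$-bound, at precisely the point you flag as an ``obstacle.'' Positivity does \emph{not} prevent the cancellation you need to exclude: in $P_{e,v'}=(q+1)P_{e,vs}-\Sigma$ the correction sum $\Sigma=\sum\mu(z,vs)q^{\ast}P_{e,z}$ has nonnegative coefficients and is \emph{subtracted}, so the coefficient of $q^{\delta(v)+1}$ in the difference can perfectly well vanish while all coefficients of $P_{e,v'}$ remain nonnegative. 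Ruling out that cancellation is the entire content of the paper's Step (b), and it is exactly where the two hypotheses you could not place are consumed: a degree count using $a(z)\ge a(vs)$ for $z\le_L vs$ and $\delta(z)\le\frac12(l(z)-a(z))$ shows that any $z$ contributing in top degree must be a distinguished involution with $z\el vs$, hence $\cL(z)=\cR(z)=\cR(vs)$; then the hypothesis $\cL(vs)\smallsetminus\cR(vs)\neq\emptyset$ combined with \cite[2.3.e]{KL} forces $vs=tz$ for some $t\in S$, which produces a second reduced expression $vs=x_1.v_1.x_2$ with $l(x_2)=l(x)-1$ and contradicts rigidity at $v_1$. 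Without this analysis the inequality $\delta(v')\ge\delta(v)+1$ is unproven.

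A second structural difference: you try to reach $q^{\delta(v)+1}$ by iterating the recursion down to $P_{e,v}$, which introduces a second correction sum and the terms $q^2P_{s,v}$, $qP_{s,v}$ (note $P_{s,v}\ne P_{e,v}$ in general, since $s\notin\cL(v)\cup\cR(v)$), all of which would also have to be controlled. The paper avoids this by first proving, as its Step (a), that $\delta(vs)=\delta(v)$ --- equivalently that $vs\in\D_1$ --- via Springer's formula expressing $\mu(v,vs)$ through the constants $\gamma_{sv,v,f}$ and $\delta_{sv,v,v'}$ attached to distinguished involutions, together with the uniqueness of the distinguished involution in a left cell. This is where the unpublished Lusztig--Springer correspondence actually enters (not, as you guessed, in controlling the correction sums), and it is what makes a single application of the recursion suffice. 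So the proposal identifies the right target and the right final inequality, but the two substantive steps of the proof are missing, and the appeal to positivity in their place is invalid.
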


\begin{proof}
The argument naturally splits into two steps:
\begin{itemize}
\item[(a)] show that $\delta(vs)$, which is $\dg(P_{e,vs})$, equals $\delta(v)$;
\item[(b)] show that $\delta(svs) = \delta(vs) + 1$.
\end{itemize}

The first step is a corollary of some results from the correspondence
of Lusztig and Springer \cite{LS}. Let us briefly recall the argument
(see also \cite[\S1.4]{Xi1}).

We have $\mu(v, vs) = 1$ (by \cite[2.3.f]{KL}) and $a(v) = a(vs)$ (by
the assumption), so by \cite[1.9]{L2}, $v \er vs$. As $\cL(vs)\neq
\cR(vs)$, clearly, $vs \not\er sv = (vs)^{-1}$. By Springer's formula
$\mu(v,vs)$, which equals $ \mu(vs,v)$, can be written as
$$\mu(v,vs) =\sum_{v'\in\D} \delta_{sv,v,v'} +
\sum_{f\in\D_1}\gamma_{sv,v,f}\pi(f).$$ By \cite[Thm.~1.8]{L2}, we
have $\gamma_{sv,v,f} = \gamma_{f,sv,v}$. If $\gamma_{f,sv,v} \neq 0$,
then \cite[Prop.~1.4(a)]{L2} implies that $f = sv$ (and
$\gamma_{f,sv,v} = 1)$. Thus in this case $sv$ and also $vs$ are in
$\D_1$.

Assume now that $vs$ is not in $\D_1$. Then by the previous argument
all $\gamma_{sv,v,f} = 0$ and hence $\mu(v,vs) = \sum_{v'\in\D}
\delta_{x^{-1},v,v'}$. But if $\delta_{x^{-1},v,v'}\neq 0$, then by a
result of Springer we have $sv\er v'$ and $v \el v'$. Since each left
cell contains only one distinguished involution (\cite[Thm.~1.10]{L2})
we must have $v = v'$. Hence we get $sv \er v$. But
$\cL(sv)\neq\cL(v)$ and we come to a contradiction with
\cite[Prop.~2.4(ii)]{KL}.

Therefore, we must have $vs\in \D_1$ which means
$$\delta(vs) = \frac12(l(vs) - a(vs) - 1) = \frac12(l(v) - a(v) - 1) = \delta(v).$$


We proceed with the second step.


By \cite[2.2.c]{KL}, we have
$$P_{e,svs} = qP_{s,vs} + P_{e,vs} - \sum_{\substack{z \prec vs \\ sz
< z}} \mu(z,vs)q_z^{-1/2}q_{vs}^{1/2}q^{1/2}P_{e,z} = qP_{s,vs} +
P_{e,vs} - \Sigma;$$ and by \cite[2.3.g]{KL}, $P_{s,vs} = P_{e,vs}$,
thus
$$P_{e,svs} = (q+1)P_{e,vs} - \Sigma.$$ As $z\prec vs$ and $s \in
\cL(z)\smallsetminus\cL(vs)$, we have $z\le_L vs$ for any $z$ in
$\Sigma$. By \cite[1.5(c)]{L2}, this implies $a(z) \ge a(vs) =
a(v)$. Now, by \cite[1.3(a)]{L2}, we obtain
$$\delta(z) \le \frac12(l(z) - a(z)) \le \frac12(l(z) - a(v)).$$ This
inequality enables us to bound the degree $d_1$ of the summands in
$\Sigma$:
$$ d_1 \le \frac12 (-l(z) + l(vs) + 1 + l(z) - a(z)) \le \frac12
(2\delta(v) + a(v) + 1 + 1 - a(v)) = \delta(v)+1,$$ thus by Step~(a),
$d_1 \le \delta(vs) + 1$. Note that the leading term of
$(q+1)P_{e,vs}$ may cancel only if $d_1 = \delta(vs) + 1$, in which
case we obtain
$$a(z) = a(v) \quad \text{and} \quad \delta(z) = \frac12 (l(z) -
a(z)).$$ Hence any such $z$ has to be a distinguished
involution. Moreover, as $a(z) = a(vs)$ and $z \le_L vs$, by
\cite[1.9(b)]{L2} we have $z\el vs$. Hence we have $\cL(z) = \cR(z) =
\cR(vs)$. Now $z\prec vs$ and
$\cL(vs)\smallsetminus\cR(vs)\neq\emptyset$ together with
\cite[2.3.e]{KL} imply $vs = tz$ for some $t\in S$.  
If $t\in\cL(x)$, we have $vs = x.v_1.x^{-1}.s = tz = tz^{-1} = x_1.v_1.x_2$ 
with $l(x_1) = l(x)+2$ and $l(x_2) = l(x)-1$, which contradicts the rigidity of $vs$ 
(and also $svs$) at $v_1$.
The case when $t\not\in\cL(x)$ can be treated using $(s,t)$-strings similar to Proposition~5.12 in 
\cite{Sh1}\footnote{This part of the argument was missing in the previous version.}.
We have $v = x.v_1.x^{-1} \in \langle t,s\rangle y \langle s,t\rangle$, where $\langle s,t\rangle$
denotes the subgroup of $W$ generated by $s$, $t$ and $s,t\notin \cL(y)\cup \cR(y)$. 
If $v_1$ is a subword of $y$, we come to a contradiction with the rigidity of 
$vs$ at $v_1$ as before.
Otherwise, one of the following cases takes place:
$v_1 = sys$, or $v_1=tyt$, or $v_1 = ty = yt$, $sy \ne ys$, or $v_1=sy=ys$, $ty \ne yt$
(there are no other possibilities because both $s$ and $t$ cannot be simultaneously 
involved in $v_1$ by rigidity). 
In each of the four cases, using the fact that both $v$ and $z = tvs$ are involutions,
we again quickly arrive to a contradiction with the rigidity assumption.

We showed that such $z$ does not exist.  Therefore the degree of
$P_{e,svs}$ is equal to the degree of $(q+1)P_{e,vs}$, and recalling
again the result of Step~(a), we have
$$\delta(svs) = \delta(vs) + 1 = \delta(v) + 1.$$ On the other hand,
$l(svs) = l(v)+2$ and $a(svs) \ge a(vs) = a(v)$, so $l(svs) - a(svs)
-2\delta(svs) \le 0$ but for any $w\in W$, $l(w) - a(w) -2\delta(w)
\ge 0$ (see \cite[1.3(a)]{L2}). We thus showed that $a(svs) = a(v)$,
$l(svs) - a(svs) -2\delta(svs) = 0$ and $svs\in\D$.
\end{proof}

We see that with the extra condition
$\cL(vs)\smallsetminus\cR(vs)\neq\emptyset$ and the assumption that
$v'$ is rigid at $v_1$, Conjecture~\ref{conj1} is true. The extra
condition is used only in Step~(b) of the proof.  The rigidity
assumption here obviously implies the rigidity of $sxv_1$ at $v_1$ (as
in Conjecture~\ref{conj1}). The converse is probably also true but we
do not know how to show it in a general setting.


The proof of Theorem~\ref{thm1} together with the following argument
allows us to establish also a special case of Conjecture~\ref{conj2}:

\begin{thm}\label{thm2}
Let $w = x.v_0 = t_n\ldots t_1.s_l\ldots s_1$ with $t_i, s_i \in S$,
$v_0 = s_l\ldots s_1\in\D_f$ is the longest element of a standard
finite parabolic subgroup of $W$ which is maximal in $w$ and $a(w) =
a(v_0)$; $u = y.u_0.y^{-1}\in\D$ with $u_0\in\D_f$ such that $a(u) =
a(u_0) = l$; and $w' = w.u.v_{01}$ with $v_{01}=s_1\ldots s_{l-1}$ has
$a(w')=a(w)$ and $\cR(w')\subsetneq\cR(w)$.

Assume that
\begin{itemize}
\item[(1)] For any $v_j = t_{j}\ldots t_1 v_0 t_1\ldots t_{j}$, $j =
0,\ldots, n-1$ and $t=t_{j+1}$ or $t = t_{j-1}$ if
$t_{j-1}\not\in\cR(v_j)$, we have $a(v_jt) = a(v_j)$,
$\cL(v_jt)\smallsetminus\cR(v_jt)\neq\emptyset$ and $tv_jt$ is rigid
at $v_0$.
\item[(2)] For any $u_j = s_{j-1}\ldots s_1 u s_1\ldots s_{j-1}$, $j = 1,\ldots, l-1$ with $u_1 = u$, we have $a(u_js_{j}) = a(u_j)$, $\cL(u_js_{j})\smallsetminus\cR(u_js_{j})\neq\emptyset$ and $s_{j}u_js_{j}$ is rigid at $u_0$;
\end{itemize}
Then $\mu(w,w')\neq 0$ and $w \er w'$.

\end{thm}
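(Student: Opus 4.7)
The plan is to establish the theorem by extending the two-step degree-tracking argument from the proof of Theorem~\ref{thm1} to the iterated setting here, with conditions (1) and (2) supplying the rigidity data needed at each iteration. The conclusion $w \er w'$ will then follow automatically from $\mu(w,w')\neq 0$, $a(w)=a(w')$, $\cR(w')\subsetneq\cR(w)$, and \cite[2.3.e]{KL}.

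First, using condition (2), I would apply Theorem~\ref{thm1} iteratively for $j=1,\ldots,l-1$ with $(v,s)=(u_j,s_j)$ and inner involution $u_0$. Since each $s_ju_js_j$ is rigid at $u_0$ and the $a$-value and left-descent conditions hold by assumption, this produces a chain of distinguished involutions $u_1=u,u_2,\ldots,u_l$ where $u_{j+1}=s_ju_js_j$, together with the degree identities $\delta(u_js_j)=\delta(u_j)$ and $\delta(u_{j+1})=\delta(u_j)+1$ furnished by Steps~(a) and (b) of the proof of Theorem~\ref{thm1}. Condition~(1) is handled symmetrically on the left, yielding a chain $v_0,v_1,\ldots,v_n$ of distinguished involutions with $v_{j+1}=t_{j+1}v_jt_{j+1}$ and the corresponding degree increments. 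Along the way one records the $\mu$-values $\mu(v_j,v_jt)=1$ and $\mu(u_j,u_js_j)=1$ that come out of Step~(a) of the proof of Theorem~\ref{thm1}.

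Next, I would assemble $P_{w,w'}$ from the pieces produced above by iterating the Kazhdan--Lusztig recursion $P_{e,svs}=(q+1)P_{e,vs}-\Sigma$ together with $P_{s,vs}=P_{e,vs}$ from \cite[2.2.c, 2.3.g]{KL}. The decomposition $w'=x\cdot v_0\cdot yu_0y^{-1}\cdot v_{01}$, combined with the identification $v_0=v_{01}\cdot s_l$ (valid because $v_0$ is the longest involution of its parabolic), organizes the recursion into two nested blocks: an outer block stripping the $t_i$ conjugations from the left and an inner block stripping the $s_j$ conjugations from $u$. At each layer the rigidity clauses in (1) and (2) guarantee, by exactly the argument at the end of Step~(b) of Theorem~\ref{thm1}, that no $z\prec\cdot$ with $a(z)$ equal to the ambient $a$-value can appear as a cancelling distinguished involution in $\Sigma$; hence the potentially cancelling sum contributes only in strictly lower degrees and the leading coefficients survive. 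The positivity hypothesis then prevents the remaining positive contributions from cancelling among themselves.

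The main obstacle is precisely this combinatorial bookkeeping in the last step. Unlike the one-reflection setup of Theorem~\ref{thm1}, the elements $w$ and $w'$ differ by the insertion of an entire compound word $uv_{01}$, so the top-degree coefficient of $P_{w,w'}$ arises from many nested applications of the recursion rather than one, and one must verify that the rigidity assumed in (2) at every intermediate $j$ (and likewise in (1)) controls all the $z\prec\cdot$ terms generated along the way. Once this is in place, reading off the coefficient of $q^{(l(w')-l(w)-1)/2}$ in $P_{w,w'}$ via \cite[2.3.e]{KL} yields $\mu(w,w')\neq 0$, and the right-equivalence $w\er w'$ follows immediately.
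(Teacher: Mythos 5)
Your overall strategy---induct via the Kazhdan--Lusztig recursion, feed the rigidity data of (1) and (2) into Theorem~\ref{thm1}, and track the top-degree coefficient---matches the paper's in outline, and your treatment of the final step ($\mu(w,w')\neq 0$ together with $\cR(w')\subsetneq\cR(w)$ giving $w\le_R w'$, with the reverse inequality coming from elementary relations) is fine. But the heart of the proof is exactly the point you flag as ``the main obstacle'' and then defer: showing that the correction sum $\Sigma$ cannot cancel the leading term. You propose to handle this ``by exactly the argument at the end of Step~(b) of Theorem~\ref{thm1},'' i.e.\ by showing that any offending $z$ would be a distinguished involution violating rigidity. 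That mechanism does not transfer to the inductive step here: the relevant $z$ satisfy $w_n = sxv_0\prec z\prec w'_{n-1} = xv_0uv_{01}$ and are not a priori distinguished involutions, so the rigidity-of-reduced-expressions contradiction from Theorem~\ref{thm1} has nothing to bite on.

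What is actually needed at this point is a different idea, and it is the one missing from your proposal. Condition (1) supplies $t\in\cL(xv_0)\smallsetminus\cL(sxv_0)$, hence $t\in\cL(w'_{n-1})\smallsetminus\cL(w_n)$, and one splits into cases according to whether $t\in\cL(z)$. Using \cite[2.3.e]{KL} this pins $z$ down explicitly ($z=w'_{n-2}$ in one case, $z=t.sxv_0$ in the other), and in each case one deduces, via the induction hypothesis and Theorem~\ref{thm1}, that two \emph{distinct} distinguished involutions would lie in the same right cell, contradicting the uniqueness statement of \cite[Thm.~1.10]{L2}. Note also that the base case $x=e$ requires its own argument: there one uses $P_{v_0,\,v_0uv_{01}}=P_{e,\,v_0uv_{01}}$ (from \cite[2.3.g]{KL}, since $v_0$ is the longest element of its parabolic) and reduces the degree computation to $\dg(P_{e,\,v_{01}^{-1}uv_{01}})$ for the distinguished involution $u_{l-1}$ produced by your chain from condition (2); your ``two nested blocks'' picture does not isolate this case or explain how the inner block terminates. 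As written, your proposal assembles the correct inputs but does not contain the argument that rules out cancellation, so it is not yet a proof.
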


We note that conditions (1) and (2) are imposed in order to apply
Theorem~\ref{thm1}. If we would be able to prove Conjecture
\ref{conj1} in its full generality, the assumptions of the theorem
would immediately simplify.

\begin{proof}
Let us first consider the case when $x = e$, so $w = v_0$.  As $v_0 =
s_l\ldots s_1$ is the longest element of a standard parabolic
subgroup, for every $i$, $s_i\in\cL(v_0) = \cR(v_0)$. Thus by \cite[2.3.g]{KL},
$$ P_{v_0,\ v_0uv_{01}} = P_{e,\ v_0uv_{01}}.$$ Theorem~\ref{thm1}
applied to each $u_j$, $j = 1, \ldots, l-1$ as in (2) implies that
$u_{l-1} = v_{01}^{-1}uv_{01} \in \D$, moreover, the argument of Step
(a) of the proof shows that $\dg(P_{e,\ v_0uv_{01}}) = \dg(P_{e,\
v_{01}^{-1}uv_{01}}).$ So
$$\dg(P_{v_0,\ v_0uv_{01}}) = \frac12(2l(v_{01})+l(u)-a(u)) = \frac12
(a(u) +l(u) - 2) = \frac12 (l(u) + l(v_{01}) - 1).$$ Hence $\mu(w, w')
\neq 0$ (in fact, we have $\mu = 1$). Now, as
$\cR(w')\subsetneq\cR(w)$, we have $w\le_R w'$ by the definition of
the preorder $\le_R$ (see \cite{KL}). The opposite inequality $w'\le_R
w$ is easy to show using induction by $l(w')-l(w)$ and relations of
the form $w_is \le_R w_i$ which follow from the definition. Thus we
obtain that $w \er w'$.

Now let $w = x.v_0$ with $x = t_n\ldots t_1$ nontrivial. In order to
prove the theorem for this case we use induction on
the length of $x$. The base case $x = e$ has already been considered. Assume
that the theorem is proven for all $w_i = t_i\ldots t_1.v_0$, $i = 0,
\ldots, n-1$ and corresponding $w'_i$. We need to show that then it
follows for $w_n = s.xv_0$ and $w'_n = s.xv_0uv_{01}$ with $s = t_n$,
in particular, the assumption (1) is satisfied for $s$.

By \cite[2.2.c]{KL}, we have
\begin{eqnarray*}
P_{sxv_0,sxv_0uv_{01}} & = & P_{w_{n-1},w'_{n-1}} + qP_{w_n,w'_{n-1}} - \sum_{\substack{w_n\le z \prec w'_{n-1} \\ sz < z}} \mu(z,w'_{n-1})q_z^{-1/2}q_{w'_{n-1}}^{1/2}q^{1/2}P_{w_n,z} \\
& = & P_{w_{n-1},w'_{n-1}} + qP_{w_n,w'_{n-1}} - \Sigma.
\end{eqnarray*}
By the induction hypothesis, $\mu(w_{n-1},w'_{n-1})\neq 0$. It follows
that a summand of $\Sigma$ can have the same degree as
$P_{w_{n-1},w'_{n-1}}$ only if it corresponds to $z$ with $w_n \prec
z$.

Thus we have to consider $z\in W$ such that
$$ sxv_0 \prec z \prec xv_0uv_{01}.$$ By assumption (1), there
exists $t\in S$ such that $t\in \cL(xv_0x^{-1}s)\smallsetminus
\cR(xv_0x^{-1}s) = \cL(xv_0)\smallsetminus \cL(sxv_0)$. It then
belongs to $\cL(xv_0uv_{01})\smallsetminus \cL(sxv_0)$. We consider two
possible cases.
\begin{itemize}
\item[(1)] First assume $t\not\in \cL(z)$. Then by \cite[2.3.e]{KL} we have $tz =
w_{n-1}' = tx'v_0uv_{01}$ and thus $z = w_{n-2}'$. We have $z =
w_{n-2}' \er w_{n-2}$ by the induction hypothesis (assuming $n\ge2$),
which is then easily seen to be right equivalent to $x'v_0{x'}^{-1}$
using the definition of $\er$.  Theorem~\ref{thm1} then implies
$x'v_0{x'}^{-1}\in \D$.  On the other hand, by a similar argument the
relations $sxv_0\prec z$, $\cR(z)\subsetneq\cR(sxv_0)$ and $a(z) =
a(sxv_0)$ imply $z \er sxv_0 \er sxv_0x^{-1}s \in \D$. We thus see
that both $x'v_0{x'}^{-1} \er sxv_0x^{-1}s$ are in $\D$ and are not
equal to each other, which is a contradiction.

The case $n = 1$ has to be considered separately.  We get $tz =
v_0uv_{01}$, which implies $\cL(z) \subset \{s_1,\ldots,s_l\} =
\cL(v_0)$ but $s\not\in\cL(v_0)$ and thus we arrive at a contradiction
again.
\item[(2)] Now assume $t \in \cL(z)$. As $t\not\in \cL(sxv_0)$, by
\cite[2.3.e]{KL} we have $z = t.sxv_0$. A similar argument to that
given above shows that $z \er xv_0uv_{01} \er xv_0$, which gives rise
to two different but equivalent distinguished involutions --- a
contradiction.
\end{itemize}
It follows that such $z$ does not exist.  Hence $\deg(P_{w_n,w'_n}) =
\deg(P_{w_{n-1},w'_{n-1}})$, $\mu(w_n,w'_n)\neq 0$ and $w_n\er
w'_n$. This finishes the proof of the theorem.
\end{proof}

\section{Applications}\label{sec:appl}

\subsection{Groups of type $(n)$}

Let $W$ be a Coxeter group whose Coxeter matrix non-diagonal entries
$m_{i,j} = m(s_i, s_j)$, $i\neq j$ are either $n$ or infinity. Thus the
only non-trivial relations in $W$ are those of the form $(s_is_j)^n =
e$ with $s_i,s_j\in S$. We say that such Coxeter groups are {\em of
type $(n)$}.

Coxeter groups of type $(n)$ include all dihedral groups, the affine
Weyl groups $\tilde{\mathrm{A}}_{1}$ and $\tilde{\mathrm{A}_{2}}$, and
infinitely many hyperbolic Coxeter groups. We claim that if some
elements of an infinite group of type $(n)$ satisfy the conditions of
Conjectures~\ref{conj1} and \ref{conj2}, then they also satisfy the
conditions of Theorems~\ref{thm1} and \ref{thm2}. This follows easily
from the combinatorics of the relations in $W$ and the fact that the
only finite standard parabolic subgroups of $W$ of rank $>1$ have the
form $W_I$ with $I = \{s_i, s_j\}$, and all such subgroups are
isomorphic to each other.  Hypothesis~\ref{conj_a} for groups of type
$(n)$ can be verified using Tits' elementary $M$-operations similar to
the proof of Theorem~4.2 in \cite{Bel}. Although the positivity
conjecture is not known in general for such groups there are partial
results in its direction (see \S2 for a short discussion).

Let us consider some concrete examples.

\begin{exam}
Let $W = \langle s_1, s_2, s_3 \mid s_1^2,\ s_2^2,\ s_3^2,\
(s_1s_2)^3,\ (s_2s_3)^3,\ (s_3s_1)^3\rangle$ be the affine Weyl group
$\tilde{{\mathrm A}}_2$, which is of type $(3)$. Then $\D_f = \{s_1,\ s_2,\ s_3,\
s_1s_2s_1,\ s_2s_3s_2,\ s_3s_1s_3\}$ and $\D_f^\bullet = \{s_1s_2s_1,\
s_2s_3s_2,\ s_3s_1s_3\}$. Let $v_1 = s_1s_2s_1 \in \D_f^\bullet$. By
Conjecture~\ref{conj1} (or here Theorem~\ref{thm1}) it gives rise to
the distinguished involution $s_3\,s_1s_2s_1\,s_3$ in $W$. If we try
to continue the process, we see that both reduced elements
$s_2s_3\,s_1s_2s_1\,s_3s_2$ and $s_1s_3\,s_1s_2s_1\,s_3s_1$ are not
rigid at $v_1$. Thus the inductive construction terminates here. The
same can be done for the other two elements in $\D_f^\bullet$. We
obtain that in total there are $6 + 3$ distinguished involutions in
$W$. Thus there are $9$ non-trivial left cells in $W$ which agrees
with the result of \cite[\S11]{L1}. The partition of $W$ into cells
can be recovered using Conjecture~\ref{conj2} which in this case is
equivalent to Theorem~\ref{thm2}. It is a simple exercise to check
that the resulting partition coincides with the one described by
Lusztig (loc.~cit.).
\end{exam}

\begin{exam}
Let $W = P_n = \langle s_1, s_2,\dots,s_n \mid s_i^2 = 1,\
(s_is_{i+1})^2 = 1,\ i = 1,\ldots, n\rangle$ (for the sake of
convenience we assume $s_{n+1} = s_1$).  Assume $n\ge 5$.  Then $W$
can be realized as a group generated by reflections in the hyperbolic
plane about the sides of a right-angled $n$-gon. This is a special
class of right-angled Coxeter groups studied in detail in \cite{Bel}
and is a group of type $(2)$. We have $\D_f = \{ s_i, s_is_{i+1} \mid
i = 1,\ldots, n \}$ and $\D_f^{\bullet} = \{ s_is_{i+1} \mid i =
1,\ldots, n \}$.  By Theorem~\ref{thm1}, the elements of the form
$t_1\ldots t_k s_is_{i+1} t_k \ldots t_1$ with $m(t_j, t_{j+1}) =
\infty$ ($j = 1,\ldots, k-1$) and $m(t_k,s_i) = m(t_k,s_{i+1}) =
\infty$ are in $\D$. These are, in fact, the only elements of $W$
that satisfy the conditions of the theorem, and thus from the
discussion above we conclude that the elements of this form give us
the whole set $\D$ of the distinguished involutions. By Theorem
\ref{thm2} we obtain that together with the obvious right equivalences
$W$ also admits the equivalences of the form $xs_is_{i+1}\er
xs_is_{i+1}t_1\ldots t_k s_js_{j+1}t_k\ldots
t_1s_i$. Theorem~\ref{prop1} now shows that these equivalences are
sufficient for describing the right cells of $W$.

The resulting description of the right cells and distinguished
involutions in $W$ agrees with the one which is given in \cite{Bel},
so we obtain an alternative proof for the main results there.
\end{exam}

\subsection{Groups with infinitely many one-sided cells} Consider Coxeter groups $W$ with the following property:
\begin{itemize}
\item[(*)] There exist $s, t_1, t_2\in S$ such that $m(s,t_1) =
m(s,t_2) = \infty$ and $m(t_1,t_2)$ is finite.
\end{itemize}
We claim that, assuming the positivity and boundedness conjectures, (*)
implies that $W$ has infinitely many one-sided cells. Indeed, let
$v_0$ be the longest element in the standard parabolic subgroup
generated by $t_1$ and $t_2$. Then all elements with reduced
expressions of the form $t_is\ldots t_js v_0 st_j \ldots st_i$ ($i,j =
1$ or $2$) satisfy the conditions of Theorem~\ref{thm1}, and hence are
in $\D$. As under our assumptions each one-sided cell of $W$ contains
a uniquely defined distinguished involution (\cite{L2}), the claim
follows.

Let us put this result in a general perspective. The coexistence of
infinite and finite exponents in the Coxeter matrix of $W$ implies
that $W$ contains a non-abelian free subgroup (in our case it is the
subgroup generated by, for example, $t_1t_2s$ and $t_1st_2$). A group
is called {\em large} if a subgroup of finite index in it has a
non-abelian free quotient. In \cite{MV} it was shown that any
non-affine infinite indecomposable Coxeter group of finite rank is
large. Any large group is SQ-universal, and hence contains a
non-abelian free subgroup. Thus an indecomposable infinite Coxeter
group is either affine or contains a non-abelian free subgroup. By the
work of Lusztig \cite{L2}, any affine Coxeter group has only finitely
many cells. Our result here gives a support to the {\em conjecture}
that all other indecomposable infinite Coxeter groups except those
whose all exponents are infinite should have infinitely many one-sided
cells.

Let us point out that there are large groups for which one cannot
produce infinitely many distinguished involution using only Theorem
\ref{thm1}.  For a simple example consider the Hurwitz triangle group
$(2,3,7)$ with presentation $\Gamma = \langle s_1, s_2, s_3 \mid
(s_1s_3)^2 = (s_1s_2)^3 = (s_2s_3)^7\rangle$.
It is easy to check that applying Theorem~\ref{thm1} while starting
from the distinguished involutions in the standard parabolic subgroups
of $\Gamma$ we can only get finitely many elements of $\D$, while
Conjecture~\ref{conj1}, if true, implies that it should be possible to
continue the process to infinity. In \cite{BG2} we present an
experimental conformation of the infiniteness of the number of
one-sided cells as well as our main conjectures for this and some
other groups.

\end{document}